\undefined \DeclareGraphicsRule{*}{eps}{*}{} \else
\newtheorem{theorem}{Theorem}[section]
\newtheorem{lemma}[theorem]{Lemma}
\newtheorem{cor}[theorem]{Corollary}
\newtheorem{prop}[theorem]{Proposition}
\newtheorem{remark}[theorem]{Remark}
\newcommand{\F}{{\mathbb F}}
\begin{document}

\title{{Erd\H{o}s-Burgess constant of commutative semigroups}}
\author{
Guoqing Wang\\
\small{School of Mathematics Science, Tiangong University, Tianjin, 300387, P. R. China}\\
\small{Email: gqwang1979@aliyun.com}
\\
}
\date{}
\maketitle

\begin{abstract}
Let $\mathcal{S}$ be a nonempty commutative semigroup written additively.
An element $e$ of $\mathcal{S}$ is said to be idempotent if $e+e=e$. The Erd\H{o}s-Burgess constant of the semigroup $\mathcal{S}$ is defined as the smallest positive integer $\ell$ such that any $\mathcal{S}$-valued sequence $T$ of length $\ell$ contain a nonempty subsequence the sum of whose terms is an idempotent of $\mathcal{S}$. We make a study of ${\rm I}(\mathcal{S})$ when
$\mathcal{S}$ is a direct product of arbitrarily many of cyclic semigroups. We give the necessary and sufficient conditions such that ${\rm I}(\mathcal{S})$ is finite, and in particular, we obtain sharp bounds of ${\rm I}(\mathcal{S})$ in case ${\rm I}(\mathcal{S})$ is finite, and determine the precise values of ${\rm I}(\mathcal{S})$ in some cases which unifies some well known results on the precise values of Davenport constant in the setting of commutative semigroups.
\end{abstract}

\noindent{\small {\bf Key Words}: {\sl Erd\H{o}s-Burgess constant; Davenport constant; Zero-sum; Direct product of cyclic semigroups }}

\section {Introduction}

Let $\mathcal{S}$ be a nonempty semigroup, endowed with a binary associative operation $*$. Let ${\rm E}(\mathcal{S})$ be the set of idempotents of $\mathcal{S}$, where $e\in \mathcal{S}$ is said to be an idempotent if $e*e=e$.
Idempotent is one of central notions in Semigroup Theory and Algebra,
also connects closely with other fields, see \cite{Cohen, Green} for the idempotent theorem in harmonic analysis, see \cite{Lint} for the application in coding theory.
One of our interest to combinatorial properties concerning idempotents in semigroups comes from a question of P. Erd\H{o}s to D.A. Burgess (see \cite{Burgess69} and \cite{Gillam72}), which can be restated as follows.

{\sl Let $\mathcal{S}$ be a finite nonempty semigroup of order $n$. A sequence of terms from $\mathcal{S}$ of length $n$ must contain one or more terms whose product, in some order, is idempotent?}

Burgess \cite{Burgess69} in 1969 gave an answer to this question in the case when $\mathcal{S}$ is commutative or contains only one idempotent. Shortly after, this question was completely affirmed by
D.W.H. Gillam, T.E. Hall and N.H. Williams, who proved the following stronger result.

\noindent \textbf{Theorem A.} (\cite{Gillam72}) \ {\sl Let $\mathcal{S}$ be a finite nonempty semigroup. Any sequence $T$ of terms from $\mathcal{S}$ of length $|T|\geq |\mathcal{S}|-|{\rm E}(\mathcal{S})|+1$ must contain one or more terms such that their product, in the order induced from $T$, is an idempotent of $\mathcal{S}$. In addition, the bound $|\mathcal{S}|-|{\rm E}(\mathcal{S})|+1$ is optimal.}

Very recently, the author of this manuscript extends the result to infinite semigroups.

\noindent \textbf{Theorem B.} (\cite{wangidempotent}, Theorem 1.1) \ {\sl Let $\mathcal{S}$ be a nonempty semigroup such that $|\mathcal{S}\setminus {\rm E}(\mathcal{S})|$ is finite. Any sequence $T$ of terms from $\mathcal{S}$ of length $|T|\geq|\mathcal{S}\setminus {\rm E}(\mathcal{S})|+1$ must contain one or more terms such that their product, in the order induced from $T$, is an idempotent of $\mathcal{S}$. }

Hence, one combinatorial constant concerning idempotents in commutative semigroups was aroused which can be stated as Definition C below.

\noindent $\bullet$ In what follows, since we deal with only commutative semigroups, we always let $\mathcal{S}$ be a nonempty commutative semigroup written additively, for which the operation is denoted by +.

\noindent \textbf{Definition C.} (\cite{wangidempotent}, Definition 4.1) \ {\sl Let $T$ be a sequence of terms from $\mathcal{S}$. We call $T$ {\bf idempotent-sum free} if $T$ contains no one or more terms with their sum being an idempotent of $\mathcal{S}$. Define $\textsc{I}(\mathcal{S})$, which is called the {\bf Erd\H{o}s-Burgess constant} of the semigroup $\mathcal{S}$, as
\begin{equation}\label{equation in definition of I(S)}
\textsc{I}(\mathcal{S})={\rm sup}\ \{|T|+1: T \mbox{ takes over all idempotent-sum free sequences of terms from } \mathcal{S}\}.
\end{equation}}

When the commutative semigroup $\mathcal{S}$ happens to be a finite abelian group, the Erd\H{o}s-Burgess constant reduces to one classical combinatorial constant, the Davenport constant. The Davenport constant of a finite abelian group $G$, denoted ${\rm D}(G)$, is defined as the smallest positive integer $\ell$  such that every sequence of terms from $G$ of length at least $\ell$ contains one or more terms with the product being the identity element of $G$. This invariant was popularized by H. Davenport in the 1960's, notably for its link with algebraic number theory (as reported in \cite{Olson1}). However, it seems that it was K. Rogers \cite{Rogers} to write the first paper to deal with the Davenport constant. This invariant ${\rm D}(G)$ has been investigated extensively in the past over 50 years, and  found applications in other areas, including Factorization Theory of Algebra (see \cite{CziszterDoGerolding, GRuzsa, GH}), Classical Number Theory, Graph Theory, and Coding Theory.  For example, the Davenport constant has been applied by W.R. Alford, A. Granville and C. Pomerance \cite{AGP} to prove that there are infinitely many Carmichael numbers, by N. Alon \cite{AlonJctb} to prove the existence of regular subgraphs, and by L.E. Marchan, O. Ordaz, I. Santos and W.A. Schmid \cite{MaOrSaSc} to establish a link between two variant Davenport constants and problems of linear codes.
What is more important, a lot of researches were motivated by the Davenport constant together with the celebrated EGZ Theorem obtained by P. Erd\H{o}s, A. Ginzburg and A. Ziv \cite{EGZ} in 1961 on additive properties of sequences in groups, which have been developed into a branch, called Zero-sum Theory (see \cite{GaoGeroldingersurvey} for a survey).

As a consequence of the Fundamental Theorem for finite abelian groups, any nontrivial finite abelian group can be written as the direct sum  $\mathbb{Z}_{n_1}\oplus\cdots\oplus \mathbb{Z}_{n_r}$ of cyclic groups  $\mathbb{Z}_{n_1},\ldots, \mathbb{Z}_{n_r}$ with $1 < n_1 \mid \cdots \mid n_r$.
Let us denote by
$${\rm d}^*(G) = \sum\limits_{i=1}^r (n_i-1).$$ In the 1960s, D. Kruyswijk \cite{EmdeBoas007} and J.E. Olson \cite{Olson2} independently gave the inequality
${\rm D}(G)\geq 1+{\rm d}^*(G)$.  On the other hand, P. Van Emde Boas and D. Kruyswijk \cite{EmdeBoas} and R. Meshulam \cite{Meshu} proved
that
$${\rm D}(G)\leq n_r+ n_r {\rm log}(\frac{|G|}{n_r}).$$ A lot of efforts were made to find the precise values of Davenport constant for finite abelian groups. However, up to date, besides for the groups of types given in Theorem D, the precise value of this constant was known only for groups of specific forms such as $\mathbb{Z}_{2}\oplus \mathbb{Z}_2 \oplus \mathbb{Z}_{2d}$ (see \cite{EmdeBoas007}), or $\mathbb{Z}_3\oplus \mathbb{Z}_3 \oplus \mathbb{Z}_{3d}$ (see \cite{Bhowmik}), etc. Even to determine the precise value of ${\rm D}(G)$ in the case when $G$ is a direct sum of three finite cyclic groups remains open for over 50 years (see \cite{GaoGeroldingersurvey}, Conjecture 3.5).

\noindent \textbf{Theorem D.} (see \cite{Olson1,Olson2,EmdeBoas007}) \ {\sl Let $G=\mathbb{Z}_{n_1}\oplus\cdots\oplus \mathbb{Z}_{n_r}$ with $1 < n_1 \mid \cdots \mid n_r$. Then ${\rm D}(G)= 1+{\rm d}^*(G)$ in case that one of the following conditions holds.

$\bullet$ $r\leq 2$;

$\bullet$ $G$ is a $p$-group;
}

For the  progress about ${\rm D}(G)$ the reader may
consult \cite{GLP12, GeroldingerScheider, Girard, PlagneSchmid, Schmid} and (\cite{Grynkiewiczmono}, P. 270). Recently, some results were obtained \cite{wangDavenportII,  wangAddtiveirreducible, wang-zhang-qu, wang-zhang-wang-qu} on additive properties of sequences in semigroups associated with Davenport constant.
Because the identity element is the unique idempotent in a group, by definition we have
\begin{equation}\label{equation I is Davenport for groups}
\textsc{I}(G)={\rm D}(G) \ \ \mbox{ for any finite abelian group } G.
 \end{equation}
In this manuscript, by studying the Erd\H{o}s-Burgess constant we hope to
unify the researches on additive properties concerning idempotents and Davenport constant and to extend zero-sum problems into the realm of commutative semigroups.
In Section 2, we introduce necessary notions. In Section 3, we
make a study of
the Erd\H{o}s-Burgess constant for the direct product of cyclic semigroups $\prod\limits_{i\in R} \langle g_i\rangle$ where $R$ is an arbitrary set and
$\langle g_i\rangle$ is a cyclic semigroup generated by $g_i$ for $i\in R$.
We give necessary and sufficient conditions such that the Erd\H{o}s-Burgess constant ${\rm I}(\prod\limits_{i\in R} \langle g_i\rangle)$ exists. In the case when ${\rm I}(\prod\limits_{i\in R} \langle g_i\rangle)$ exists, we give sharp lower and upper bounds of that, and determine the precise value of ${\rm I}(\prod\limits_{i\in R} \langle g_i\rangle)$ with some constraints which unifies the results on Davenport constant shown in Theorem D into the product of cyclic semigroups.
Moreover, the extremal structure of that semigroup in which the given bound is attained was also discussed preliminarily.

\section{Notions}

For integers $a,b\in \mathbb{Z}$, we set $[a,b]=\{x\in \mathbb{Z}: a\leq x\leq b\}$. For a real number $x$, we denote by $\lfloor x\rfloor$ the largest integer that is less
than or equal to $x$, and by $\lceil x\rceil$ the smallest integer that is greater than or equal to $x$.

The identity element of the commutative semigroup $\mathcal{S}$, denoted $0_{\mathcal{S}}$ (if exists), is the unique element $z$ of
$\mathcal{S}$ such that $z+a=a$ for every $a\in \mathcal{S}$.  For any positive integer $m$ and any element $a\in \mathcal{S}$, we denote by $ma$ the sum $\underbrace{a+\cdots+a}\limits_{m}$. We use idempotent to mean an element $e$ such that $e+e=e$ with the addition of $\mathcal{S}$, and let ${\rm E}(\mathcal{S})=\{e\in\mathcal{S}: e \mbox{ is an idempotent} \}$.
Let $X$ be a subset of $\mathcal{S}$. We say that $X$ generates $\mathcal{S}$, or the elements of $X$ are generators of $\mathcal{S}$, provided that every element $a\in \mathcal{S}$ can be written as a finite sum of one or more elements (repetition is allowed) from $X$, in which case we write $S=\langle X\rangle$. If S happens to be a monoid (a semigroup with an identity), we will understand the identity element of this monoid as a trivial finite sum of elements from $X$ (of length $0$).
In particular, we use $\langle x\rangle$ in place of $\langle X\rangle$ if $X=\{x\}$ is a singleton, and we say that $\mathcal{S}$ is a cyclic semigroup if it is generated by a single element. For any element $x\in \mathcal{S}$ such that $\langle x\rangle$ is finite, the least integer $k>0$ such that $kx=tx$ for some positive integer $t\neq k$ is the {\sl index} of $x$,  then the least integer $n>0$ such that $(k+n)x=k x$ is the {\sl period} of $x$. We denote a finite cyclic semigroup of index $k$ and period $n$ by $C_{k; n}$.

\noindent $\bullet$ {\sl Note that if $k=1$ the semigroup $C_{k; n}$ reduces to be a cyclic group of order $n$ which is isomorphic to the additive groups $\mathbb{Z}_{n}$ of integers modulo $n$.}

We also need to introduce notation and terminologies on sequences over semigroups and follow the notation of A. Geroldinger, D.J. Grynkiewicz and
others used for sequences over groups (cf. [\cite{Grynkiewiczmono}, Chapter 10] or [\cite{GH}, Chapter 5]). Let ${\cal F}(\mathcal{S})$
be the free commutative monoid, multiplicatively written, with basis
$\mathcal{S}$. We denote multiplication in $\mathcal{F}(\mathcal{S})$ by the boldsymbol $\cdot$ and we use brackets for all exponentiation in $\mathcal{F}(\mathcal{S})$. By $T\in {\cal F}(\mathcal{S})$, we mean $T$ is a sequence of terms from $\mathcal{S}$ which is
unordered, repetition of terms allowed. Say
$T=a_1a_2\cdot\ldots\cdot a_{\ell}$ where $a_i\in \mathcal{S}$ for $i\in [1,\ell]$.
The sequence $T$ can be also denoted as $T=\mathop{\bullet}\limits_{a\in \mathcal{S}}a^{[{\rm v}_a(T)]},$ where ${\rm v}_a(T)$ is a nonnegative integer and
means that the element $a$ occurs ${\rm v}_a(T)$ times in the sequence $T$. By $|T|$ we denote the length of the sequence, i.e., $|T|=\sum\limits_{a\in \mathcal{S}}{\rm v}_a(T)=\ell.$ By $\varepsilon$ we denote the
{\sl empty sequence} in $\mathcal{S}$ with $|\varepsilon|=0$. We call $T'$
a subsequence of $T$ if ${\rm v}_a(T')\leq {\rm v}_a(T)\ \ \mbox{for each element}\ \ a\in \mathcal{S},$ denoted by $T'\mid T,$ moreover, we write $T^{''}=T\cdot  T'^{[-1]}$ to mean the unique subsequence of $T$ with $T'\cdot T^{''}=T$.  We call $T'$ a {\sl proper} subsequence of $T$ provided that $T'\mid T$ and $T'\neq T$. In particular, the  empty sequence  $\varepsilon$ is a proper subsequence of every nonempty sequence. We say $T_1,\ldots,T_{m}$ are {\bf disjoint subsequences} of $T$ provided that $T_1\cdot \ldots\cdot T_{m}\mid T$.
Let $\sigma(T)=a_1+\cdots +a_{\ell}$ be the sum of all terms from $T$.
We call $T$ a {\bf zero-sum} sequence provided that $\mathcal{S}$ is a monoid and $\sigma(T)=0_{\mathcal{S}}$.
In particular,
if $\mathcal{S}$ is a monoid,  we allow $T=\varepsilon$ to be empty and adopt the convention
that $\sigma(\varepsilon)=0_\mathcal{S}.$
We say  the sequence $T$ is
\begin{itemize}
     \item a {\bf zero-sum free sequence} if $T$ contains no nonempty zero-sum subsequence;
     \item a {\bf minimal zero-sum sequence} if $T$ is a nonempty zero-sum sequence and and $T$ contains no nonempty proper zero-sum subsequence;
     \item an {\bf idempotent-sum sequence} if $\sigma(T)\in {\rm E}(\mathcal{S})$;
     \item an {\bf idempotent-sum free sequence} if $T$ contains no nonempty idempotent-sum subsequence;
\end{itemize}

It is worth remarking that when the commutative semigroup $\mathcal{S}$ is an abelian group, the notion {\sl zero-sum sequence} [resp. zero-sum free sequence] and {\sl idempotent-sum sequence} [resp. idempotent-sum free sequence] make no difference.

Let $\{A_i: i\in R\}$ be a family of sets indexed by a (nonempty) set $R$. The Cartesian product of $A_i$ is the set of all functions $f:R\rightarrow \bigcup\limits_{i\in R} A_i$ such that $f(i)\in A_i$ for all $i\in R$. It is denoted $\prod\limits_{i\in R} A_i.$ Let $\{\mathcal{S}_i: i\in R\}$ be a family (possibly infinite) of {\sl commutative} semigroups with operation $+_{\mathcal{S}_i}$.
Define a binary {\sl commutative} operation on the Cartesian product (of sets) $\prod\limits_{i\in R}\mathcal{S}_i$ as follows. If $f,g\in \prod\limits_{i\in R}\mathcal{S}_i$ (that is, $f,g: R\rightarrow \bigcup\limits_{i\in R}\mathcal{S}_i$, and $f(i),g(i)\in \mathcal{S}_i$ for each $i$), then $f+g:R\rightarrow \bigcup\limits_{i\in R}\mathcal{S}_i$ is the function given by $f+g:i\mapsto f(i)+_{\mathcal{S}_i}g(i)$, in particular, in the case when $R$ is finite, we could identify $f\in \prod\limits_{i\in R}\mathcal{S}_i$ with its image $\{x_i\}_{i\in R}$ ($x_i=f(i)$ for each $i\in R$), and the binary operation `+' in $\prod\limits_{i\in R}\mathcal{S}_i$ is the familiar component-wise addition: $\{x_i\}_{i\in R}+\{y_i\}_{i\in R}=\{x_i+_{\mathcal{S}_i} y_i\}_{i\in R}$. Then  $\prod\limits_{i\in R}\mathcal{S}_i$ together with this binary operation, is called the direct product of the family of semigroups $\{\mathcal{S}_i: i\in R\}$. If $R=[1,n]$, then $\prod\limits_{i\in R}\mathcal{S}_i$ is usually denoted $\mathcal{S}_1\times \mathcal{S}_2\times \cdots\times \mathcal{S}_n$.

For any nonempty set $R$, let $\mathcal{S}=\prod\limits_{i\in R}\langle g_i\rangle$ be the direct product of a family of cyclic semigroups $\{\langle g_i\rangle: i\in R\}$.
By Proposition 4.2 in \cite{wangidempotent}, if there exists some $i\in R$ such that $\langle g_i\rangle$ is infinite, then ${\rm I}(\mathcal{S})$ is infinite. So we need only to consider the case when all $\langle g_i\rangle$ are finite. Hence, we shall denote
$\mathcal{S}=\prod\limits_{i\in R}{\rm C}_{k_i;  n_i}$  where the finite cyclic semigroup ${\rm C}_{k_i;  n_i}$ is generated by $g_i$ with index $k_i>0$ and period $n_i>0$ for each $i\in R$.
For any element $a$ of $\mathcal{S}$ and any $i\in R$, let ${\rm ind}_{g_i}(a(i))$ be the least positive integer $t_i$ such that $t_i g_i=a(i)$.
Let $$G_\mathcal{S}=\prod\limits_{i\in R}\mathbb{Z}_{n_i}$$ be the direct product of a family of additive groups of integers modulo $n_i$, which is the largest group contained in $\mathcal{S}$. Define a map $\psi:\mathcal{S}\rightarrow G_\mathcal{S}$ as follows.
For any $a\in \mathcal{S}$, let $\psi(a)\in G_\mathcal{S}$ be such that $\psi(a)(i)={\rm ind}_{g_i} (a(i)) \ {\rm mod}\  n_i$ for each $i\in R$. We could extend $\psi$ to the map $\Psi:\mathcal{F}(\mathcal{S})\rightarrow \mathcal{F}(G_{\mathcal{S}})$ given by $\Psi: T\mapsto \mathop{\bullet}\limits_{a\mid T} \psi(a)$ for any sequence $T\in \mathcal{F}(\mathcal{S})$.

\section{Results}

We begin this section with some necessary lemmas.

\begin{lemma}\label{lemma I(sub)< I(S)} Let $S$ be a commutative semigroup and $S'$ a subsemigroup of $S$. If ${\rm I}(\mathcal{S})$ is finite, then ${\rm I}(\mathcal{S}')$ is finite and ${\rm I}(\mathcal{S}')\leq {\rm I}(\mathcal{S})$.
\end{lemma}

\begin{proof} The conclusion follows immediately from the fact that any idempotent-sum free sequence of terms from $S'$ is also an idempotent-sum free sequence of terms from $S$.
\end{proof}

\begin{lemma} (Folklore) \label{lemma n-1 or n zero-sum free}\ Let $n\geq 2$,  and let $T\in \mathcal{F}(\mathbb{Z}_n)$ be a sequence. Suppose that $T$ is either a zero-sum free sequence of length $n-1$ or a minimal zero-sum sequence of length $n$. Then the sequence $T$ admits only one distinct value, which is a generator of the group $\mathbb{Z}_n$.
 \end{lemma}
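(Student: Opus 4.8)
The plan is to first treat the case in which $T$ is zero-sum free of length $n-1$, and then to derive the minimal zero-sum case of length $n$ from it.

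For the zero-sum free case, I would fix an arbitrary ordering $b_1, b_2, \ldots, b_{n-1}$ of the terms of $T$ and consider the partial sums $s_0 = 0$ and $s_j = b_1 + \cdots + b_j$ for $j \in [1, n-1]$. First note that these $n$ elements of $\mathbb{Z}_n$ are pairwise distinct, for an equality $s_i = s_j$ with $i < j$ would produce the nonempty zero-sum subsequence $b_{i+1} \cdot \ldots \cdot b_j$; hence $\{s_0, s_1, \ldots, s_{n-1}\} = \mathbb{Z}_n$. The same conclusion applies verbatim to the partial sums of any reordering of $T$, since every reordering is again zero-sum free. Now I would compare the ordering above with the one obtained by interchanging the terms in positions $i$ and $i+1$, for a fixed $i \in [1, n-2]$ (the case $n=2$ being trivial): a direct check shows that only the $i$-th partial sum is affected, changing from $s_{i-1} + b_i$ to $s_{i-1} + b_{i+1}$, while the other $n-1$ partial sums are unchanged. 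Since both $n$-element families exhaust $\mathbb{Z}_n$ and coincide outside index $i$, the two values at index $i$ must be equal, forcing $b_i = b_{i+1}$. As $i$ was arbitrary, all the $b_j$ coincide, say $b_j = g$ for all $j$, so $T = g^{[n-1]}$; and distinctness of $s_0, \ldots, s_{n-1}$ says precisely that $0, g, 2g, \ldots, (n-1)g$ are distinct in $\mathbb{Z}_n$, which is equivalent to $\gcd(g,n) = 1$, i.e. $g$ generates $\mathbb{Z}_n$.

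For the minimal zero-sum case, let $T$ have length $n$ and pick any term $a \mid T$, setting $T' = T \cdot a^{[-1]}$, which has length $n-1$. By minimality of $T$, a nonempty zero-sum subsequence of $T'$ would be a proper nonempty zero-sum subsequence of $T$, which cannot exist; hence $T'$ is zero-sum free, and the first part yields $T' = g^{[n-1]}$ for some generator $g$ of $\mathbb{Z}_n$. Then $0 = \sigma(T) = (n-1)g + a = -g + a$ in $\mathbb{Z}_n$, so $a = g$ and therefore $T = g^{[n]}$, as desired.

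I do not anticipate a serious obstacle. The one point deserving care is the adjacent-transposition step: the bookkeeping that two orderings of the same multiset differing by a single adjacent swap have partial-sum families agreeing in all but one position, together with the observation that when the two swapped values happen to coincide the step is vacuous, so nothing is lost by ranging over a single fixed ordering.
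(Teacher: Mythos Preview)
Your proof is correct. The paper does not supply its own argument for this lemma---it is simply labeled ``(Folklore)'' and stated without proof---so there is no paper proof to compare against. Your partial-sums-plus-adjacent-swap argument is one of the standard elementary routes to this fact: pigeonhole on the $n$ partial sums $s_0,\ldots,s_{n-1}$ forces them to exhaust $\mathbb{Z}_n$, and the adjacent-transposition trick then pins down that consecutive terms, hence all terms, coincide. The reduction of the length-$n$ minimal zero-sum case to the length-$(n-1)$ zero-sum free case by deleting a single term is likewise standard and cleanly executed.
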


\begin{lemma}\label{Lemma decomposition} \ Let $n\geq 2$, and let $T\in \mathcal{F}(\mathbb{Z}_n\setminus \{0\})$ of length at least $n-1$. Let $U$ be one of the longest zero-sum subsequences of $T$. If $|T\cdot U^{[-1]}|=n-1$ then $T$ admits only one distinct value, which is a generator of the group $\mathbb{Z}_n$.
\end{lemma}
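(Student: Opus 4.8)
The plan is to exploit the maximality of $U$ to locate a rigid zero-sum free ``tail'' of $T$, then apply Lemma~\ref{lemma n-1 or n zero-sum free} to that tail, and finally use a short length count to propagate the rigidity to all of $T$.

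First I would set $W:=T\cdot U^{[-1]}$, so that $|W|=n-1$ by hypothesis, and check that $W$ is zero-sum free: if $W'$ were a nonempty zero-sum subsequence of $W$, then, since $W'\mid W=T\cdot U^{[-1]}$ is disjoint from $U$ in $T$, the concatenation $U\cdot W'$ would be a zero-sum subsequence of $T$ of length $|U|+|W'|>|U|$, contradicting the choice of $U$ as a longest zero-sum subsequence of $T$. Now $W$ is a zero-sum free sequence over $\mathbb{Z}_n$ of length exactly $n-1$, so Lemma~\ref{lemma n-1 or n zero-sum free} gives $W=g^{[n-1]}$ for some generator $g$ of $\mathbb{Z}_n$. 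If $U=\varepsilon$, then $T=W=g^{[n-1]}$ and we are done (this also covers the boundary case $|T|=n-1$, since there $|W|=|T|$ forces $U=\varepsilon$), so from now on assume $U$ is nonempty.

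The remaining task is to show that every term of $U$ equals $g$. Fix a term $a\mid U$; since $g$ generates $\mathbb{Z}_n$ and $a\neq 0$, there is a unique $c\in[1,n-1]$ with $a=cg$. I would then look at the subsequence $V:=(U\cdot a^{[-1]})\cdot g^{[c]}$ of $T$; this is legitimate because $U\cdot a^{[-1]}\mid U$ and $g^{[c]}\mid g^{[n-1]}=W$ (using $c\le n-1$) are disjoint subsequences of $T$, and its sum is $\sigma(U)-a+cg=-cg+cg=0$. Thus $V$ is a zero-sum subsequence of $T$ of length $|U|-1+c$, so the maximality of $U$ forces $|U|-1+c\le|U|$, i.e. $c=1$ and $a=g$. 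Since $a$ was an arbitrary term of $U$, we conclude $U=g^{[|U|]}$, whence $T=U\cdot W=g^{[|U|+n-1]}$ has a single distinct value, namely the generator $g$.

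I do not expect a genuine obstacle here: the whole argument is short once the auxiliary sequence $V$ is found, and $V$ is essentially forced, being the natural way to turn the ``deficiency'' $-a=\sigma(U\cdot a^{[-1]})$ of $U$ into a zero-sum subsequence by adjoining copies of $g$ drawn from $W$. The only points requiring a little care are the disjointness claims that make $U\cdot W'$ and $(U\cdot a^{[-1]})\cdot g^{[c]}$ legitimate subsequences of $T$, and the inequality $c\le n-1$ guaranteeing $g^{[c]}\mid W$; the conceptual core is simply the reduction to Lemma~\ref{lemma n-1 or n zero-sum free} and the length comparison $|V|\le|U|$.
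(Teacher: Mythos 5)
Your proof is correct, and it takes a genuinely different route from the paper's for the second half of the argument. Both proofs begin the same way: the remainder $W=T\cdot U^{[-1]}$ is zero-sum free (else appending a nonempty zero-sum subsequence of $W$ to $U$ would beat the maximality of $U$), has length exactly $n-1$, and is therefore $g^{[n-1]}$ for a generator $g$ by Lemma~\ref{lemma n-1 or n zero-sum free}. From there the paper works with a maximal family of disjoint \emph{minimal} zero-sum subsequences $U_1,\ldots,U_\ell$ of $T$ and an exchange argument: for each term $x$ of some $U_\theta$ it extracts a minimal zero-sum subsequence $U_\theta'$ of $x\cdot U_0$ containing $x$, observes that the new remainder is again a length-$(n-1)$ zero-sum free sequence to which Lemma~\ref{lemma n-1 or n zero-sum free} applies, and needs a case split on $|U_\theta'|=n$ versus $|U_\theta'|<n$ to conclude that every term equals $z$. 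You instead exploit the rigid structure of $W$ directly: writing an arbitrary term $a$ of $U$ as $cg$ with $c\in[1,n-1]$ (possible since $a\neq 0$), the swap $V=(U\cdot a^{[-1]})\cdot g^{[c]}$ is a legitimate zero-sum subsequence of $T$ of length $|U|+c-1$, and maximality of $U$ immediately forces $c=1$. This replaces the paper's decomposition-plus-exchange machinery with a single length comparison, avoids the case analysis entirely, and cleanly handles the boundary case $U=\varepsilon$; the only ingredients you need beyond the paper's first step are the disjointness of $U\cdot a^{[-1]}$ and $g^{[c]}$ inside $T$ and the bound $c\le n-1$, both of which you verify. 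A nice, tighter argument.
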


\begin{proof}  We take disjoint minimal zero-sum subsequences of $T$, say $U_1,\ldots, U_{\ell}$, with $\ell$ being {\bf maximal}. Let $U_0=T\cdot (U_1\cdot \ldots\cdot U_{\ell})^{[-1]}$. By the maximality of $\ell$, we have $U_0$ is zero-sum free. It follows that $|U_0|\leq {\rm D}(\mathbb{Z}_n)-1=n-1$, which implies that $U=U_1\cdot \ldots\cdot U_{\ell}$ is one of the longest zero-sum subsequence of $T$ and $|U_0|=n-1$. It follows from Lemma \ref{lemma n-1 or n zero-sum free} that $$U_0=z^{[n-1]}$$ where $z$ is some generator of $\mathbb{Z}_n$. Now it suffices to assume that $|T|\geq n$, i.e., $\ell>0$, and prove that
$\alpha=z$ for any term $\alpha \mid U$.

Take arbitrary $\theta\in [1,\ell]$ and take an arbitrary term $x$ of $U_{\theta}$. Since $|x\cdot U_0|=n={\rm D}(\mathbb{Z}_n)$ and $U_0$ is zero-sum free, it follows that
$x\cdot U_0$ contains a minimal zero-sum subsequence  $U_{\theta}^{'}$  with $x\mid U_{\theta}^{'}.$
Let $U_0^{'}=(U_{\theta}\cdot U_0) \cdot {U_{\theta}^{'}}^{[-1]}$. Observe that $U_1,\ldots, U_{\theta-1}, U_{\theta}^{'},U_{\theta+1},\ldots,U_{\ell}$ are disjoint minimal zero-sum subsequences of $T$, and that $U_0^{'}=T\cdot (U_1\cdot \ldots\cdot  U_{\theta-1}\cdot U_{\theta}^{'}\cdot U_{\theta+1}\cdot \ldots\cdot U_{\ell})^{[-1]}$.
By the maximality of $\ell$, we see that $U_0^{'}$ is also a zero-sum free sequence and
\begin{equation}\label{equation length of U0'}
|U_0^{'}|=n-1.
\end{equation}

Suppose $|U_{\theta}^{'}|=n$. Then $U_{\theta}^{'}=x\cdot U_0$. It follows from Lemma \ref{lemma n-1 or n zero-sum free} that $x=z.$

Suppose $|U_{\theta}^{'}|<n.$ Observe that $U_0^{'}=((x\cdot U_0)\cdot {U_{\theta}^{'}}^{[-1]})\cdot (U_{\theta}\cdot x^{[-1]})$  and
$(x\cdot U_0) \cdot {U_{\theta}^{'}}^{[-1]}$ is a nonempty subsequence of $U_0$. Since all terms of $T$ are nonzero, it follows that
\begin{equation}\label{equation |Utheta|>1}
|U_{\theta}|>1
\end{equation}
and $|U_{\theta}\cdot x^{[-1]}|>0$.
By \eqref{equation length of U0'} and Lemma \ref{lemma n-1 or n zero-sum free}, we derive that $y=z$ for every term  $y\mid U_{\theta}\cdot x^{[-1]}.$
By \eqref{equation |Utheta|>1}, we have that as $x$ takes every term of $U_{\theta}$, so does $y$.  By the arbitrariness of choosing $\theta$ from $[1,\ell]$ and the arbitrariness of choosing $x$ from $U_{\theta}$, we have the lemma proved.
\end{proof}

\begin{lemma}\label{Lemma cyclic semigroup} (\cite{Grillet monograph},  Chapter I, Lemma 5.7, Proposition 5.8, Corollary 5.9) \ Let $\mathcal{S}=C_{k; n}$ be a finite cyclic semigroup generated by the element $x$. Then  $\mathcal{S}=\{x,\ldots,k x,(k+1)x,\ldots,(k+n-1)x\}$
with
$$\begin{array}{llll} & ix+jx=\left \{\begin{array}{llll}
               (i+j)x, & \mbox{ if } \  i+j \leq  k+n-1;\\
                tx, &  \mbox{ if }  \ i+j \geq k+n, \ \mbox{ where} \\
                &  \ \ \ \ \ \ \ \ \ \  k\leq t\leq k+n-1 \ \mbox{ and } \ t\equiv i+j\pmod{n}. \\
              \end{array}
           \right. \\
\end{array}$$
Moreover,

\noindent (i) \  there exists a unique idempotent, $\ell x$, in the cyclic semigroup $\langle x\rangle$, where $$\ell\in [k,k+n-1] \  \mbox{ and }\  \ell\equiv 0\pmod {n};$$

\noindent (ii) \  $\{k x,(k+1)x,\ldots,(k+n-1)x\}$ is a cyclic subgroup of $\mathcal{S}$ isomorphic to the additive group $\mathbb{Z}_{n}$ of integers modulo $n$.
\end{lemma}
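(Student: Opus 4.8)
The statement is the classical structure theorem for finite monogenic semigroups, so one legitimate option is simply to quote the cited reference; but here is how I would prove it from scratch. Consider the surjective semigroup homomorphism $\varphi\colon(\mathbb{N},+)\to\mathcal{S}$ given by $\varphi(m)=mx$, and let $\sim$ be the congruence it induces, so that $i\sim j$ exactly when $ix=jx$. Since $\mathcal{S}$ is finite, the pigeonhole principle forces $ix=jx$ for some $i<j$; let $k$ be the least positive integer arising as such an $i$ (this is the index, and for the least such $i$ any partner $t$ with $tx=ix$, $t\neq i$, automatically satisfies $t>i$), and let $n$ be the least positive integer with $(k+n)x=kx$ (the period). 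The entire lemma then reduces to describing $\sim$ explicitly.

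The crux is the following claim: for $i\le j$ one has $ix=jx$ if and only if $i=j$, or else $i\ge k$ and $n\mid j-i$. I would first treat the base point $k$: the set $M=\{s\ge 1:(k+s)x=kx\}$ is closed under addition (add $s'$ to $kx=(k+s')x$ and combine with $kx=(k+s)x$) and, because $(k+n)x=kx$, it is also closed under subtracting $n$ as long as the result stays positive (from $kx=(k+s)x$ and $kx=(k+n)x$ one gets $kx=(k+s-n)x$); since $n=\min M$, this forces $M=n\mathbb{N}$. Next I would bootstrap to an arbitrary $i\ge k$: given a relation $ix=(i+s)x$, translate it by a large multiple $Ln$ of $n$ chosen so that $k+Ln\ge i$, add $(k+Ln-i)x$ to both sides, and use $(k+Ln)x=kx$ to conclude $Ln+s\in M$, hence $n\mid s$; conversely $kx=(k+s)x$ with $n\mid s$ yields $ix=(i+s)x$ after adding $(i-k)x$. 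The minimality of $k$ rules out any collision with smaller index. Consequently $x,2x,\ldots,(k+n-1)x$ are pairwise distinct (a collision would need a positive multiple of $n$ strictly below $n$), and since every $mx$ with $m\ge k$ reduces modulo $n$ into this list, they exhaust $\mathcal{S}$, giving $|\mathcal{S}|=k+n-1$; the displayed addition formula is then merely the reduction of $(i+j)x$ to this canonical form.

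For part (i), $mx$ is idempotent iff $(2m)x=mx$; if $m<k$ this would force $2m=m$, impossible, so $m\ge k$, whence the claim gives $n\mid m$, and exactly one integer in $[k,k+n-1]$ is divisible by $n$, yielding the unique idempotent $\ell x$ with $\ell\in[k,k+n-1]$ and $\ell\equiv 0\pmod n$. For part (ii), put $H=\{kx,\ldots,(k+n-1)x\}$: the addition formula shows $H$ is closed under $+$ (every $(i+j)x$ reduces into $[k,k+n-1]$); $\ell x$ acts as identity on $H$ because $\ell+m\equiv m\pmod n$; each $mx\in H$ has inverse $m'x$ where $m'\in[k,k+n-1]$ satisfies $m'\equiv -m\pmod n$; and $mx\mapsto (m\bmod n)$ is well defined by the claim, bijective since $|H|=n$, and multiplicative, so $H\cong\mathbb{Z}_n$.

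The only genuinely delicate point is the bootstrap in the middle paragraph: because $\mathbb{N}$ admits no subtraction one cannot cancel directly in a relation $ix=jx$, so the full description of the congruence $\sim$ has to be pried out of the single relation $(k+n)x=kx$ together with the minimality of $k$ and $n$, by translating relations along multiples of $n$. Once the canonical form is available, parts (i) and (ii) are routine bookkeeping with the addition table.
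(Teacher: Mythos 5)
The paper does not prove this lemma at all: it is quoted verbatim from Grillet's monograph (Chapter I, Lemma 5.7, Proposition 5.8, Corollary 5.9), so there is no in-paper argument to compare yours against. Your from-scratch proof is correct and is the standard derivation of the structure of a finite monogenic semigroup: the reduction of everything to the explicit description of the congruence on $(\mathbb{N},+)$ induced by $m\mapsto mx$ is the right key step, and you handle the one genuinely delicate point correctly --- since $\mathbb{N}$ has no cancellation, the claim that $ix=(i+s)x$ with $i\ge k$ forces $n\mid s$ must be extracted by translating the relation up to $(k+Ln)x=kx$ rather than by cancelling, which is exactly what you do. (The only cosmetic quibble: in the bootstrap you should take $k+Ln>i$ strictly so that $(k+Ln-i)x$ is a legitimate element to add, or note that the case $k+Ln=i$ gives the conclusion directly; this is trivial to arrange.) Parts (i) and (ii) then follow from the canonical form exactly as you say.
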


By Lemma \ref{Lemma cyclic semigroup}, it is easy to derive the following.

\begin{lemma}\label{Lemma product condition containing idmepotent} \  For any nonempty set $R$, let $\mathcal{S}=\prod\limits_{i\in R}{\rm C}_{k_i;  n_i}$ where ${\rm C}_{k_i;  n_i}=\langle g_i\rangle$ for each $i\in R$. Then there exists a unique idempotent $e$ in $\mathcal{S}$, where $${\rm ind}_{g_i}(e(i))\in [k_i,k_i+n_i-1] \  \mbox{ and }\  {\rm ind}_{g_i}(e(i))\equiv 0\pmod {n_i}$$ for each $i\in R$. In particular, for any sequence  $W\in \mathcal{F}(\mathcal{S})$, $\sigma(W)$ is equal to the unique idempotent $e$ in $\mathcal{S}$ if, and only if,
$\sum\limits_{a\mid W}{\rm ind}_{g_i}(a(i))\geq \left\lceil\frac{k_i}{n_i}\right\rceil n_i$ and $\sum\limits_{a\mid W}{\rm ind}_{g_i}(a(i))\equiv 0\pmod{n_i}$ for all $i\in R$.
\end{lemma}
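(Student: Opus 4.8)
The plan is to reduce the statement, one component at a time, to the explicit description of a finite cyclic semigroup given in Lemma~\ref{Lemma cyclic semigroup}. Since the operation on $\mathcal{S}=\prod_{i\in R}{\rm C}_{k_i;n_i}$ is componentwise, an element $f\in\mathcal{S}$ is idempotent if and only if $f(i)$ is idempotent in ${\rm C}_{k_i;n_i}$ for every $i\in R$. By Lemma~\ref{Lemma cyclic semigroup}(i), each ${\rm C}_{k_i;n_i}=\langle g_i\rangle$ has exactly one idempotent, $\ell_i g_i$, with $\ell_i\in[k_i,k_i+n_i-1]$ and $\ell_i\equiv 0\pmod{n_i}$; hence $\mathcal{S}$ has exactly one idempotent $e$, namely the one with ${\rm ind}_{g_i}(e(i))=\ell_i$ for all $i$. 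To pin down $\ell_i$, note that the $n_i$ consecutive integers in $[k_i,k_i+n_i-1]$ contain a unique multiple of $n_i$, and it is the smallest multiple of $n_i$ that is at least $k_i$, i.e. $\ell_i=\lceil k_i/n_i\rceil\,n_i$ (with $k_i>0$ this value indeed lies in $[k_i,k_i+n_i-1]$). This gives the first assertion.

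For the second assertion, fix $W\in\mathcal{F}(\mathcal{S})$ and, for $i\in R$, set $s_i=\sum_{a\mid W}{\rm ind}_{g_i}(a(i))$. Writing each term $a\mid W$ as $a(i)={\rm ind}_{g_i}(a(i))\,g_i$, the addition rule of Lemma~\ref{Lemma cyclic semigroup} together with associativity yields, by a straightforward induction on $|W|$, that $\sigma(W)(i)=s_i g_i$ in ${\rm C}_{k_i;n_i}$. Hence $\sigma(W)=e$ precisely when $s_i g_i=\ell_i g_i$ holds in ${\rm C}_{k_i;n_i}$ for every $i\in R$, so the question is now purely local.

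The local input, again read off from Lemma~\ref{Lemma cyclic semigroup}, is the characterisation of equality in a finite cyclic semigroup: for positive integers $s,s'$ one has $s g_i=s' g_i$ in ${\rm C}_{k_i;n_i}$ if and only if either $s=s'$, or $s\geq k_i$, $s'\geq k_i$ and $s\equiv s'\pmod{n_i}$; this is immediate because ${\rm C}_{k_i;n_i}=\{g_i,2g_i,\ldots,(k_i+n_i-1)g_i\}$ consists of these $k_i+n_i-1$ pairwise distinct elements and $(k_i+n_i)g_i=k_i g_i$. Apply this with $s'=\ell_i$; since $\ell_i\geq k_i$ and $\ell_i\equiv0\pmod{n_i}$, the relation $s_i g_i=\ell_i g_i$ is equivalent to: $s_i=\ell_i$, or $s_i\geq k_i$ with $s_i\equiv0\pmod{n_i}$. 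Because $\ell_i=\lceil k_i/n_i\rceil n_i$ is the least multiple of $n_i$ that is $\geq k_i$, the first alternative is contained in the second, and the second is equivalent to $s_i\geq\lceil k_i/n_i\rceil n_i$ together with $s_i\equiv0\pmod{n_i}$. Quantifying over $i\in R$ and substituting $s_i=\sum_{a\mid W}{\rm ind}_{g_i}(a(i))$ gives exactly the stated equivalence.

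I expect no real obstacle: this is a routine unwinding of Lemma~\ref{Lemma cyclic semigroup}. The two places that need a little attention are the induction identifying $\sigma(W)(i)$ with $s_i g_i$ under the wrap-around addition of a cyclic semigroup, and the small amount of number theory translating ``the unique multiple of $n_i$ in $[k_i,k_i+n_i-1]$'' into $\lceil k_i/n_i\rceil n_i$ and showing that the same quantity controls when a multiple of $n_i$ is $\geq k_i$. One should also read the second assertion for nonempty $W$, since otherwise $\sigma(W)$ need not be defined when $\mathcal{S}$ is not a monoid.
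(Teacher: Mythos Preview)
Your argument is correct and is precisely the intended one: the paper itself offers no detailed proof, stating only that the lemma ``is easy to derive'' from Lemma~\ref{Lemma cyclic semigroup}, and what you have written is exactly that derivation carried out componentwise. Your added observations---that $\ell_i=\lceil k_i/n_i\rceil n_i$ is the unique multiple of $n_i$ in $[k_i,k_i+n_i-1]$, and that the second assertion should be read for nonempty $W$---are apt refinements the paper leaves implicit.
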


Note that in Lemma \ref{Lemma product condition containing idmepotent}, the condition that $\sum\limits_{a\mid W}{\rm ind}_{g_i}(a(i))\equiv 0\pmod{n_i}$ for all $i\in R$ is equivalent to that $\Psi(W)$ is a zero-sum sequence in the group $G_{\mathcal{S}}$.

\begin{theorem}\label{theorem in product of cyclic semigroups} \  For any nonempty set $R$, let $\mathcal{S}=\prod\limits_{i\in R}{\rm C}_{k_i;  n_i}$ where $k_i,n_i\geq 1$ for any $i\in R$. Let $R_1=\{i\in R: n_i>1\}$.  Then ${\rm I}(\mathcal{S})$ is finite if and only if both $|R_1|$ and ${\rm sup}\ \{k_i:i\in R\setminus R_1\}$ are finite. Moreover, if ${\rm I}(\mathcal{S})$ is finite, then

\noindent (i)\  $${\rm I}(\mathcal{S})\geq \max\left({\rm sup}\left\{(\left\lceil\frac{k_i}{n_i}\right\rceil-1)n_i: i\in R\right\}+1+ \sum\limits_{i\in  R_1}(n_i-1), \ \ {\rm sup}\left\{\left\lceil\frac{k_i}{n_i}\right\rceil-1: i\in R\right\}+{\rm D(G_{\mathcal{S}})} \right);$$

\noindent (ii)\  $$ {\rm I}(\mathcal{S})\leq  {\rm sup}\left\{(\left\lceil\frac{k_i}{n_i}\right\rceil-1)n_i: i\in R\right\}+{\rm D(G_{\mathcal{S}})},$$ and if ${\rm D(G_{\mathcal{S}})}=1+\sum\limits_{i\in R_1}(n_i-1)$ then the equality ${\rm I}(\mathcal{S})={\rm sup}\left\{(\left\lceil\frac{k_i}{n_i}\right\rceil-1)n_i: i\in R\right\}+{\rm D(G_{\mathcal{S}})}$ holds;

\noindent (iii) \ Suppose that $\gcd(n_i,\  n_j)=1$ for any distinct $i,j$ of $R$.
Then $${\rm I}(\mathcal{S})={\rm sup}\left\{(\left\lceil\frac{k_i}{n_i}\right\rceil-1)n_i: i\in R\right\}+{\rm D(G_{\mathcal{S}})}$$ holds if, and only if, there exists some $\epsilon\in R$ such that $(\left\lceil\frac{k_{\epsilon}}{n_{\epsilon}}\right\rceil-1)n_{\epsilon}={\rm sup}\left\{(\left\lceil\frac{k_i}{n_i}\right\rceil-1)n_i: i\in R\right\}$, with $\epsilon\notin R_1$ or $\prod\limits_{i\in R_1\setminus \{\epsilon\}}n_i$ divides $\left\lceil\frac{k_{\epsilon}}{n_{\epsilon}}\right\rceil-1$.
\end{theorem}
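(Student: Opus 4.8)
The plan is to transport everything to $G_{\mathcal{S}}=\prod_{i\in R_1}\mathbb{Z}_{n_i}$ via the map $\Psi$ and to use the criterion of Lemma~\ref{Lemma product condition containing idmepotent}. Write $m_i=\lceil k_i/n_i\rceil$, $M={\rm sup}\{(m_i-1)n_i:i\in R\}$ and $N=\prod_{i\in R_1}n_i$; under the coprimality hypothesis of (iii) the Chinese Remainder Theorem gives $G_{\mathcal{S}}\cong\mathbb{Z}_N$ and ${\rm D}(G_{\mathcal{S}})=N$. The criterion reads: a nonempty $W\mid T$ has $\sigma(W)=e$ iff $\Psi(W)$ is zero-sum in $G_{\mathcal{S}}$ and $\sum_{a\mid W}{\rm ind}_{g_i}(a(i))\ge m_in_i$ for every $i\in R$. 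Since every ${\rm ind}_{g_i}(a(i))\ge 1$, the $i$-th cost condition is automatic as soon as $\Psi(W)$ is zero-sum and $|W|>(m_i-1)n_i$ (a multiple of $n_i$ exceeding $(m_i-1)n_i$ is $\ge m_in_i$); this single remark drives the upper bound and the positive cases of (iii).

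For (ii) and finiteness: given $T$ with $|T|=M+{\rm D}(G_{\mathcal{S}})$, choose disjoint minimal zero-sum subsequences $V_1,\dots,V_\ell$ of $\Psi(T)$ with $\ell$ maximal; the complement is zero-sum free, hence of length $\le {\rm D}(G_{\mathcal{S}})-1$, so $|V_1\cdot\ldots\cdot V_\ell|\ge M+1$. Lifting to disjoint $W_1,\dots,W_\ell\mid T$ and setting $W=W_1\cdot\ldots\cdot W_\ell$ yields $\Psi(W)$ zero-sum and $|W|\ge M+1$, hence $\sigma(W)=e$; thus ${\rm I}(\mathcal{S})\le M+{\rm D}(G_{\mathcal{S}})$, and together with the first lower bound in (i) this gives the ``moreover'' clause. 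Finiteness: if $|R_1|$ and ${\rm sup}\{k_i:i\notin R_1\}$ are finite then $M$ and ${\rm D}(G_{\mathcal{S}})$ are finite; conversely, if ${\rm sup}\{k_i:i\notin R_1\}=\infty$ the sequences $h^{[k_i-1]}$ with $h(i)=g_i$, $h(j)=e(j)$ for $j\ne i$, are idempotent-sum free of unbounded length, while if $|R_1|=\infty$ then ${\rm D}(G_{\mathcal{S}})=\infty$ and any lift to $\mathcal{S}$ of a long zero-sum free sequence of $G_{\mathcal{S}}$ is idempotent-sum free. For (i), fix $\epsilon$ with $(m_\epsilon-1)n_\epsilon=M$ and $\eta$ with $m_\eta-1={\rm sup}\{m_i-1:i\in R\}$; with $h$ as above and $b_i(i)=g_i$, $b_i(j)=e(j)$ for $j\ne i$, the sequence $h^{[M]}\cdot\mathop{\bullet}\limits_{i\in R_1}b_i^{[n_i-1]}$ has length $M+\sum_{i\in R_1}(n_i-1)$, and the criterion shows that any nonempty $W$ from it with $\Psi(W)$ zero-sum is supported on $h$ and $b_\epsilon$ with total $\epsilon$-cost $\le(m_\epsilon-1)n_\epsilon+(n_\epsilon-1)<m_\epsilon n_\epsilon$, so it is idempotent-sum free; for the second bound, letting $a\in\mathcal{S}$ have $a(j)=n_jg_j$ for all $j$ (so $\psi(a)=0$ and ${\rm ind}_{g_\eta}(a(\eta))=n_\eta$ when $m_\eta\ge 2$) and $\widetilde S$ any lift of a longest zero-sum free sequence of $G_{\mathcal{S}}$, the only nonempty subsequences of $a^{[m_\eta-1]}\cdot\widetilde S$ with zero-sum $\Psi$-image are the $a^{[c]}$, $1\le c\le m_\eta-1$, whose $\eta$-coordinate has index $cn_\eta<m_\eta n_\eta$, so this sequence of length $(m_\eta-1)+{\rm D}(G_{\mathcal{S}})-1$ is idempotent-sum free.

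For (iii), the ``if'' direction: if $\epsilon\notin R_1$ realizes $M$ then $\psi(h)=0$ and $h^{[M]}\cdot\widetilde S$ is idempotent-sum free of length $M+N-1$; if $\epsilon\in R_1$ realizes $M$ and $N'_\epsilon:=\prod_{i\in R_1\setminus\{\epsilon\}}n_i$ divides $m_\epsilon-1$, then $M=qN$ with $q=(m_\epsilon-1)/N'_\epsilon$, and taking $b(j)=g_j$ for all $j$ (so $\psi(b)$ generates $\mathbb{Z}_N$) the sequence $b^{[qN+N-1]}$ is idempotent-sum free of length $M+N-1$, because a subsequence $b^{[c]}$ with zero-sum $\Psi$-image has $N\mid c$, forcing $c\le qN=M<m_\epsilon n_\epsilon$. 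Either way ${\rm I}(\mathcal{S})\ge M+N$, so by (ii) equality holds. The ``only if'' direction is the crux. Assume the negation: $R^*:=\{i:(m_i-1)n_i=M\}$ is a nonempty subset of $R_1$ with $N'_i\nmid(m_i-1)$ for all $i\in R^*$; this forces $M\ge 2$ (else $R^*\subseteq R\setminus R_1$) and $M\not\equiv 0,1\pmod N$ (for $i\in R^*$, $N\mid M$ would mean $N'_i\mid m_i-1$, and $N\mid M-1$ together with $n_i\mid M$, $n_i\ge 2$ would force $n_i\mid 1$). Let $T$ be arbitrary of length $M+N-1$, let $T_0$ collect the terms $a$ with $\psi(a)=0$ (so ${\rm ind}_{g_j}(a(j))\ge n_j$ for those), and $s=|T_0|$. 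If $s\ge M+1$ then $W=T_0$ is idempotent-sum. Otherwise $p:=|T\cdot T_0^{[-1]}|=M+N-1-s\ge N-1$; run the maximal-disjoint-decomposition argument (as in the proof of Lemma~\ref{Lemma decomposition}) on $\Psi(T\cdot T_0^{[-1]})\in\mathcal{F}(\mathbb{Z}_N\setminus\{0\})$: if the zero-sum free remainder has length $\le N-2$ one obtains an idempotent-sum $W\supseteq T_0$ of length $\ge M+1$; if it has length exactly $N-1$, Lemma~\ref{Lemma decomposition} gives $\Psi(T\cdot T_0^{[-1]})=z^{[p]}$ for a generator $z$, and then $W=T_0\cdot W_z$ with $W_z$ any length-$\lfloor p/N\rfloor N$ subsequence of $T\cdot T_0^{[-1]}$ has $\Psi(W)$ zero-sum and $\sum_{a\mid W}{\rm ind}_{g_j}(a(j))\ge sn_j+\lfloor p/N\rfloor N\ge m_jn_j$ for all $j\in R$, so $\sigma(W)=e$; hence ${\rm I}(\mathcal{S})\le M+N-1$.

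The main obstacle is exactly this last verification in the ``only if'' part of (iii): keeping $sn_j+\lfloor p/N\rfloor N\ge m_jn_j$ valid simultaneously for all $j$ requires a case check on $s\in\{0,1\}$ versus $s\ge 2$ (using $M\bmod N\ge 2$ to secure $\lfloor p/N\rfloor N>M$ when $s\le 1$) and on $j\notin R_1$, $j\in R_1\setminus R^*$, $j\in R^*$, and it is precisely here that the hypotheses failing ($M\ge 2$, $M\not\equiv 0,1\bmod N$) and the coprimality of the $n_i$ (through $G_{\mathcal{S}}\cong\mathbb{Z}_N$ and Lemma~\ref{Lemma decomposition}) are used essentially. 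The remaining steps — the upper bound, the two constructions in (i), and the ``if'' constructions in (iii) — are routine once the lifts are chosen as above.
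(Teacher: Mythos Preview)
Your argument is correct and tracks the paper's closely: the same reduction to $G_{\mathcal{S}}$ via $\Psi$ and Lemma~\ref{Lemma product condition containing idmepotent}, the same maximal zero-sum decomposition for (ii), and the same explicit sequences for (i) and the ``if'' half of (iii) (your $h,b_i,b$ are the paper's $e_r,e_i,\nu$ up to relabelling).

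The one place where the two routes diverge is the ``only if'' half of (iii). You work directly with an arbitrary $T$ of length $M+N-1$, split off $T_0=\{a\mid T:\psi(a)=0\}$, and case-split on $s=|T_0|$; the endgame inequality $s\,n_j+\lfloor p/N\rfloor N>(m_j-1)n_j$ is correct but, as you acknowledge, fiddly. The paper instead assumes $T$ is idempotent-sum free of that length, first pins down $|U|=M$ and $|T\cdot U^{[-1]}|=N-1$ for the longest subsequence $U$ with $\Psi(U)$ zero-sum, and then proves as a preliminary step (Claim~C) that $T_0$ must be \emph{empty}: any term $z$ with $\psi(z)=0$ would lie in $U$ and contribute at least $n_i$ in each coordinate, giving $\sum_{a\mid U}{\rm ind}_{g_i}(a(i))\ge n_i+(M-1)>(m_i-1)n_i$ for every $i$ (here one uses that no $i\notin R_1$ attains $M$), so $U$ itself would already be idempotent-sum. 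With $T_0=\varepsilon$ eliminated, Lemma~\ref{Lemma decomposition} applies to all of $\Psi(T)$, and the finish is a one-liner: take $T'\mid T$ of length the unique multiple of $N$ in $[M+1,M+N-1]$. So the paper buys a clean endgame by front-loading Claim~C; your version trades that simplification for the case analysis on $s$ that you flagged as the main obstacle.
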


\begin{proof} \ Say ${\rm C}_{k_i;  n_i}=\langle g_i\rangle$ for each $i\in R$.

(i). For any $i\in R$, let $e_i\in \mathcal{S}$ be such that ${\rm ind}_{g_i}(e_i(i))=1$ and ${\rm ind}_{g_j}(e_i(j))=n_j$ when $j\in R\setminus \{i\}$. Take arbitrary  $r\in R$, and take an arbitrary finite subset $X\subseteq R_1$.
Let $$T_1=e_r^{\left[(\left\lceil\frac{k_r}{n_r}\right\rceil-1)n_r\right]}\cdot(\mathop{\bullet}\limits_{i\in X}e_i^{\left[n_i-1\right]}).$$

\noindent \textbf{Claim A.} \ The sequence $T_1$ is idempotent-sum free.

\noindent {\sl Proof of Claim A.} \ Assume to the contrary that $T_1$ contains a nonempty idempotent-sum subsequence
$L$.

Suppose that there exists some $\delta\in X\setminus \{r\}$ such that ${\rm v}_{e_{\delta}}(L)>0$, i.e., ${\rm v}_{e_{\delta}}(L)\in [1, n_{\delta}-1]$, then $\sum\limits_{a\mid L}{\rm ind}_{g_{\delta}}(a(\delta))=
{\rm v}_{e_{\delta}}(L) \ {\rm ind}_{g_{\delta}}(e_{\delta}(\delta))+
\sum\limits_{a\mid L\cdot e_{\delta}^{[-{\rm v}_{e_{\delta}}(L)]}}{\rm ind}_{g_{\delta}}(a(\delta))\equiv {\rm v}_{e_{\delta}}(L) \ {\rm ind}_{g_{\delta}}(e_{\delta}(\delta))\not \equiv 0\pmod {n_{\delta}}.$ By Lemma \ref{Lemma product condition containing idmepotent}, $L$ is not an idempotent-sum sequence, which is absurd.  Hence,  $L=e_r^{m}$ with $1\leq m\leq (\left\lceil\frac{k_r}{n_r}\right\rceil-1)n_r+(n_r-1)=\left\lceil\frac{k_r}{n_r}\right\rceil n_r-1$.

Since $\sum\limits_{a\mid L}{\rm ind}_{g_r}(a(r))=m \ {\rm ind}_{g_r}(e_r(r))=m<\left\lceil\frac{k_r}{n_r}\right\rceil n_r$, it follows from Lemma \ref{Lemma product condition containing idmepotent}  that $L$ is not an idempotent-sum sequence, a contradiction. This proves Claim A.
\qed

Take an arbitrary zero-sum free sequence $\widetilde{U}\in \mathcal{F}(G_{\mathcal{S}})$. Let $U\in\mathcal{F}(\mathcal{S})$ be such that $\Psi(U)=\widetilde{U}$.
Let $\mu\in \mathcal{S}$ be such that ${\rm ind}_{g_i}(\mu(i))=n_i$ for all $i\in R$.
Take arbitrary $t\in R$. Let
$$T_2=\mu^{\left[\left\lceil\frac{k_t}{n_t}\right\rceil-1\right]}\cdot U.$$

\noindent \textbf{Claim B.} \ The sequence $T_2$ is idempotent-sum free.

\noindent {\sl Proof of Claim B.} \ Assume to the contrary that $T_2$ contains a nonempty idempotent-sum subsequence
$L$, say $L=\mu^{\beta}\cdot W$ with $\beta\in [0, \left\lceil\frac{k_t}{n_t}\right\rceil-1]$ and $W\mid U$.

If $W=\varepsilon$ is an empty sequence, then $\sum\limits_{a\mid L}{\rm ind}_{g_t}(a(t))=\beta \ n_t<\left\lceil\frac{k_t}{n_t}\right\rceil n_t$, a contradiction with $L$ being idempotent-sum by Lemma \ref{Lemma product condition containing idmepotent}. Hence, $W$ is a nonempty subsequence of $U$. By the choice of $\widetilde{U}$,  we have that $\sum\limits_{a\mid W}{\rm ind}_{g_{\eta}}(a(\eta))\not\equiv 0\pmod {n_{\eta}}$ for some $\eta\in R$.
It follows that
$\sum\limits_{a\mid L}{\rm ind}_{g_{\eta}}(a(\eta))=\beta \ n_{\eta}+\sum\limits_{a\mid W}{\rm ind}_{g_{\eta}}(a(\eta))\not\equiv 0\pmod {n_{\eta}}$, a contradiction with $L$ being idempotent-sum by Lemma \ref{Lemma product condition containing idmepotent}. This proves Claim B.
\qed

By Claim A,  and by the arbitrariness of choosing $r\in R$ and $X\subseteq  R_1$, we conclude that
\begin{equation}\label{equation I(S)>=in general}
{\rm I}(\mathcal{S})\geq {\rm sup}\left\{(\left\lceil\frac{k_r}{n_r}\right\rceil-1)n_r: r\in R\right\}+1+{\rm sup}\ \{\sum\limits_{i\in X}(n_i-1): X\mbox{ takes all finite subsets of } R_1\}.
\end{equation}
By Claim B, and by the arbitrariness of choosing $t\in R$ and the sequence $\widetilde{U}$,  we conclude that
\begin{equation}\label{equation I(S)<=in general}
{\rm I}(\mathcal{S})\geq {\rm sup}\left\{\left\lceil\frac{k_t}{n_t}\right\rceil-1: t\in R\right\}+1+{\rm sup}\ \{|\widetilde{U}|: \widetilde{U} \mbox{ takes all zero-sum free sequences of } \mathcal{F}(G_{\mathcal{S}}) \}.
\end{equation}
By \eqref{equation I(S)>=in general}, we derive that if ${\rm I}(\mathcal{S})$ is finite then $|R_1|$ is finite,  and then combined with \eqref{equation I(S)<=in general}, we derive that if ${\rm I}(\mathcal{S})$ is finite then ${\rm sup}\left\{\left\lceil\frac{k_t}{n_t}\right\rceil-1: t\in R\right\}$ is finite, and so ${\rm sup}\left\{k_t-1: t\in R\setminus R_1\right\}={\rm sup}\left\{\left\lceil\frac{k_t}{n_t}\right\rceil-1: t\in R\setminus R_1\right\}$ is finite, equivalently, ${\rm sup}\left\{k_t: t\in R\setminus R_1\right\}$ is finite. Moreover,
combined with \eqref{equation in definition of I(S)} and \eqref{equation I is Davenport for groups}, we derive that the inequality in Conclusion (i) holds in the case when ${\rm I}(\mathcal{S})$ is finite.

(ii). Suppose that both $|R_1|$ and ${\rm sup}\ \{k_i:i\in R\setminus R_1\}$ are finite. Then ${\rm sup}\left\{(\left\lceil\frac{k_i}{n_i}\right\rceil-1)n_i: i\in R\right\}$ is finite, and $G_{\mathcal{S}}=\prod\limits_{i\in R}\mathbb{Z}_{n_i}\cong \prod\limits_{i\in R_1}\mathbb{Z}_{n_i}$ is a finite abelian group. Let $T\in \mathcal{F}(\mathcal{S})$ be a sequence of length $|T|={\rm sup}\left\{(\left\lceil\frac{k_i}{n_i}\right\rceil-1)n_i: i\in R\right\}+{\rm D}(G_{\mathcal{S}})$. Take a nonempty subsequence $L$ of $T$ such that $\Psi(L)$ is a zero-sum sequence over $G_{\mathcal{S}}$, i.e.,
\begin{equation}\label{equation L is zero-sum in G}
\sum\limits_{a\mid L}{\rm ind}_{g_i}(a(i))\equiv 0\pmod {n_i}  \mbox{ for all } \ i\in R,
\end{equation}
with $|L|$ being maximal.
By the maximality of $|L|$, we have that $|T\cdot L^{[-1]}|\leq {\rm D(G_{\mathcal{S}})}-1$, equivalently, $|L|\geq {\rm sup}\left\{(\left\lceil\frac{k_i}{n_i}\right\rceil-1)n_i: i\in R\right\}+1$. It follows that for all $i\in R$, $\sum\limits_{a\mid L}{\rm ind}_{g_i}(a(i))\geq |L|\geq  (\left\lceil\frac{k_i}{n_i}\right\rceil-1)n_i+1$ and from \eqref{equation L is zero-sum in G} that $\sum\limits_{a\mid L}{\rm ind}_{g_i}(a(i))\geq  \left\lceil\frac{k_i}{n_i}\right\rceil n_i$.
By Lemma \ref{Lemma product condition containing idmepotent}, we have that the sequence $T$ is not idempotent-sum free. Therefore, we conclude that ${\rm I}(\mathcal{S})$ is finite and
${\rm I}(\mathcal{S})\leq  {\rm sup}\left\{(\left\lceil\frac{k_i}{n_i}\right\rceil-1)n_i: i\in R\right\}+{\rm D}(G_{\mathcal{S}})$, in the case when both $|R_1|$ and ${\rm sup}\ \{k_i:i\in R\setminus R_1\}$ are finite.  Futhermore, combined with Conclusion (i), we have that if ${\rm D(G_{\mathcal{S}})}=1+\sum\limits_{i\in R_1}(n_i-1)$ then ${\rm I}(\mathcal{S})={\rm sup}\left\{(\left\lceil\frac{k_i}{n_i}\right\rceil-1)n_i: i\in R\right\}+{\rm D}(G_{\mathcal{S}})$. This proves Conclusion (ii).

(iii). Let $N=\prod\limits_{i\in R_1} n_i$.
Note that $G_{\mathcal{S}}\cong \mathbb{Z}_{N}$. By Theorem D, ${\rm D(G_{\mathcal{S}})}=N$.

We first consider the sufficiency.

Suppose that there exists some $\epsilon\in R\setminus R_1$ such that $(\left\lceil\frac{k_{\epsilon}}{n_{\epsilon}}\right\rceil-1)n_{\epsilon}={\rm sup}\left\{(\left\lceil\frac{k_i}{n_i}\right\rceil-1)n_i: i\in R\right\}$. Since $n_{\epsilon}=1$,
we see ${\rm sup}\left\{\left\lceil\frac{k_i}{n_i}\right\rceil-1: i\in R\right\}\geq \left\lceil\frac{k_{\epsilon}}{n_{\epsilon}}\right\rceil-1=(\left\lceil\frac{k_{\epsilon}}{n_{\epsilon}}\right\rceil-1)n_{\epsilon}={\rm sup}\left\{(\left\lceil\frac{k_i}{n_i}\right\rceil-1)n_i: i\in R\right\}\geq {\rm sup}\left\{\left\lceil\frac{k_i}{n_i}\right\rceil-1: i\in R\right\}$, which implies ${\rm sup}\left\{(\left\lceil\frac{k_i}{n_i}\right\rceil-1)n_i: i\in R\right\}={\rm sup}\left\{\left\lceil\frac{k_i}{n_i}\right\rceil-1: i\in R\right\}$. It follows from (i) and (ii)  immediately that ${\rm I}(\mathcal{S})={\rm sup}\left\{(\left\lceil\frac{k_i}{n_i}\right\rceil-1)n_i: i\in R\right\}+{\rm D(G_{\mathcal{S}})}$, done.

Suppose that there exists some $\epsilon\in R_1$ such that \begin{equation}\label{equation k epsilon etc=sup}
(\left\lceil\frac{k_{\epsilon}}{n_{\epsilon}}\right\rceil-1)n_{\epsilon}={\rm sup}\left\{(\left\lceil\frac{k_i}{n_i}\right\rceil-1)n_i: i\in R\right\}
\end{equation}
with $\prod\limits_{i\in R_1\setminus \{\epsilon\}}n_i$ divides $\left\lceil\frac{k_{\epsilon}}{n_{\epsilon}}\right\rceil-1$.
To show ${\rm I}(\mathcal{S})={\rm sup}\left\{(\left\lceil\frac{k_i}{n_i}\right\rceil-1)n_i: i\in R\right\}+{\rm D(G_{\mathcal{S}})}$, by (ii) it suffices to construct an idempotent-sum free sequence $V\in \mathcal{F}(\mathcal{S})$ with length $|V|={\rm sup}\left\{(\left\lceil\frac{k_i}{n_i}\right\rceil-1)n_i: i\in R\right\}+{\rm D(G_{\mathcal{S}})}-1={\rm sup}\left\{(\left\lceil\frac{k_i}{n_i}\right\rceil-1)n_i: i\in R\right\}+N-1$. Let $\nu\in \mathcal{S}$ be such that ${\rm ind}_{g_i}(\nu(i))=1$ for all $i\in R$. Let $$V=\nu^{\left[{\rm sup}\left\{(\left\lceil\frac{k_i}{n_i}\right\rceil-1)n_i: i\in R\right\}+N-1\right]}.$$ To prove that $V$ is idempotent-sum free, we suppose to the contrary that $V$ contains a nonempty idempotent-sum subsequence, say $\nu^{[m]}$, where
\begin{equation}\label{equation m in}
1\leq m\leq {\rm sup}\left\{(\left\lceil\frac{k_i}{n_i}\right\rceil-1)n_i: i\in R\right\}+N-1.
\end{equation}
 It follows from \eqref{equation k epsilon etc=sup} and Lemma \ref{Lemma product condition containing idmepotent} that
 \begin{equation}\label{equation m geq}
 m=m\ {\rm ind}_{g_{\epsilon}}(\nu(\epsilon))\geq \left\lceil\frac{k_{\epsilon}}{n_{\epsilon}}\right\rceil n_{\epsilon}>{\rm sup}\left\{(\left\lceil\frac{k_i}{n_i}\right\rceil-1)n_i: i\in R\right\},
 \end{equation}
and that
$m=m\ {\rm ind}_{g_{i}}(\nu(i))\equiv 0\pmod {n_i}$ for all $i\in R_1$, equivalently,
\begin{equation}\label{equation m=0 mod N}
m\equiv 0\pmod N.
\end{equation} Recall that $\prod\limits_{i\in R_1\setminus \{\epsilon\}}n_i$ divides $\left\lceil\frac{k_{\epsilon}}{n_{\epsilon}}\right\rceil-1$.
 Then $N\mid (\left\lceil\frac{k_{\epsilon}}{n_{\epsilon}}\right\rceil -1) n_{\epsilon}={\rm sup}\left\{(\left\lceil\frac{k_i}{n_i}\right\rceil-1)n_i: i\in R\right\}$, combined with \eqref{equation m in}, \eqref{equation m geq} and \eqref{equation m=0 mod N} we derive a contradiction. Hence, $V$ is idempotent-sum free, done.

 Now, we consider the necessity. Suppose that ${\rm I}(\mathcal{S})={\rm sup}\left\{(\left\lceil\frac{k_i}{n_i}\right\rceil-1)n_i: i\in R\right\}+{\rm D(G_{\mathcal{S}})}$ and that there exists no $\epsilon\in R$ such that $(\left\lceil\frac{k_{\epsilon}}{n_{\epsilon}}\right\rceil-1)n_{\epsilon}={\rm sup}\left\{(\left\lceil\frac{k_i}{n_i}\right\rceil-1)n_i: i\in R\right\}$, with $\epsilon\in R\setminus R_1$ or $\prod\limits_{i\in R_1\setminus \{\epsilon\}}n_i$ divides $\left\lceil\frac{k_{\epsilon}}{n_{\epsilon}}\right\rceil-1$.
 Then $R_1\neq \emptyset$, which implies
 \begin{equation}\label{equation N>1}
 N>1.
 \end{equation}
  Let $T\in \mathcal{F}(\mathcal{S})$ be an idempotent-sum free sequence with $|T|={\rm I}(\mathcal{S})-1={\rm sup}\left\{(\left\lceil\frac{k_i}{n_i}\right\rceil-1)n_i: i\in R\right\}+{\rm D(G_{\mathcal{S}})}-1$.
Take a subsequence $U$ of $T$ such that $\Psi(U)$ is a zero-sum sequence, i.e., \begin{equation}\label{equation sum in all Ti zero}
\sum\limits_{a\mid U}{\rm ind}_{g_i}(a(i))\equiv 0\pmod {n_i}  \mbox{ for all }  i\in R,
\end{equation} with $|U|$ being {\bf maximal}.
Let $$W=T \cdot U^{[-1]}.$$
Since $T$ is idempotent-sum free, it follows from \eqref{equation sum in all Ti zero} and Lemma \ref{Lemma product condition containing idmepotent} that there exists some $\delta\in R$ such that $\sum\limits_{a\mid U}{\rm ind}_{g_{\delta}}(a(\delta))\leq(\left\lceil\frac{k_{\delta}}{n_{\delta}}\right\rceil-1)n_{\delta},$
and so $$|U|\leq \sum\limits_{a\mid U}{\rm ind}_{g_{\delta}}(a(\delta))\leq(\left\lceil\frac{k_{\delta}}{n_{\delta}}\right\rceil-1)n_{\delta}\leq {\rm sup}\left\{(\left\lceil\frac{k_i}{n_i}\right\rceil-1)n_i: i\in R\right\}.$$
On the other hand, by the maximality of $|U|$, we have that $\Psi(W)\in \mathcal{F}(G_{\mathcal{S}})$ is {\bf zero-sum free} and
$|W|=|\Psi(W)|\leq {\rm D(G_{\mathcal{S}})}-1$, equivalently,  $|U|=|T|-|W|\geq {\rm sup}\left\{(\left\lceil\frac{k_i}{n_i}\right\rceil-1)n_i: i\in R\right\}$. It follows that
\begin{equation}\label{equation sum|Ti|=}
|U|={\rm sup}\left\{(\left\lceil\frac{k_i}{n_i}\right\rceil-1)n_i: i\in R\right\}
\end{equation} and
\begin{equation}\label{equation |T cdot Ti-1|=}
|W|={\rm D(G_{\mathcal{S}})}-1=N-1.
\end{equation}

Now we show the following.

\noindent \textbf{Claim C.} \ All terms of $\Psi(T)$ are nonzero.

\noindent {\sl Proof of Claim C.} \ Assume to the contrary that there exists some term $z\mid T$ such that $\psi(z)$ is the zero element of $G_{\mathcal{S}}$. Since $\Psi(W)$ is zero-sum free, it follows that $z$ is one term of $U$. Since ${\rm ind}_{g_{i}}(z(i))$ is a positive multiple of $n_i$ for all $i\in R$ and there exists no $\epsilon\in R\setminus R_1$ such that $(\left\lceil\frac{k_{\epsilon}}{n_{\epsilon}}\right\rceil-1)n_{\epsilon}={\rm sup}\left\{(\left\lceil\frac{k_i}{n_i}\right\rceil-1)n_i: i\in R\right\}$, it follows from \eqref{equation sum|Ti|=} that for all $i\in R$,
$$\begin{array}{llll}
\sum\limits_{a\mid U}{\rm ind}_{g_i}(a(i))&=&
{\rm ind}_{g_i}(z(i))+\sum\limits_{a\mid U\cdot z^{[-1]}}{\rm ind}_{g_i}(a(i))  \\
&\geq &n_i+|U\cdot z^{[-1]}|\\
&=&n_i+{\rm sup}\left\{(\left\lceil\frac{k_r}{n_r}\right\rceil-1)n_r: r\in R\right\}-1\\
&>&(\left\lceil\frac{k_i}{n_i}\right\rceil-1)n_i,\\
\end{array}$$
It follows from \eqref{equation sum in all Ti zero} that $\sum\limits_{a\mid U}{\rm ind}_{g_i}(a(i))\geq \left\lceil\frac{k_i}{n_i}\right\rceil n_i$ for all $i\in R$, and follows from Lemma \ref{Lemma product condition containing idmepotent} that $U$ is a nonempty idempotent-sum subsequence of $T$, a contradiction with $T$ being idempotent-sum free. This proves Claim C. \qed

By \eqref{equation N>1}, \eqref{equation |T cdot Ti-1|=} and Claim C and by applying Lemma \ref{Lemma decomposition} with $\Psi(T)$, we conclude that
\begin{equation}\label{equation u and v same in T}
\psi(\alpha)=\psi(\beta) \mbox{ for any two terms } \alpha,\beta \mbox{ of } T.
\end{equation}

Since there exists no $\epsilon\in R$ such that $(\left\lceil\frac{k_{\epsilon}}{n_{\epsilon}}\right\rceil-1)n_{\epsilon}={\rm sup}\left\{(\left\lceil\frac{k_i}{n_i}\right\rceil-1)n_i: i\in R\right\}$ with $\epsilon\in R\setminus R_1$ or $\prod\limits_{i\in R_1\setminus \{\epsilon\}}n_i$ divides $\left\lceil\frac{k_{\epsilon}}{n_{\epsilon}}\right\rceil-1$,
we derive that $N$ does not divide ${\rm sup}\left\{(\left\lceil\frac{k_i}{n_i}\right\rceil-1)n_i: i\in R\right\}$. It follows that there exists some $$M\in [{\rm sup}\left\{(\left\lceil\frac{k_i}{n_i}\right\rceil-1)n_i: i\in R\right\}+1, \ \ {\rm sup}\left\{(\left\lceil\frac{k_i}{n_i}\right\rceil-1)n_i: i\in R\right\}+N-1]$$ with $N\mid M$. Take a subsequence $T'$ of $T$ of length $|T'|=M$. By \eqref{equation u and v same in T}, we have that $\Psi(T')$ is a zero-sum sequence, i.e.,  $\sum\limits_{a\mid T'}{\rm ind}_{g_i}(a(i))\equiv 0\pmod {n_i}$ for all $i\in R$. Moreover, we have that
$\sum\limits_{a\mid T'}{\rm ind}_{g_i}(a(i))\geq |T'|> (\left\lceil\frac{k_i}{n_i}\right\rceil-1)n_i$ and so $\sum\limits_{a\mid T'}{\rm ind}_{g_i}(a(i))\geq  \left\lceil\frac{k_i}{n_i}\right\rceil n_i$ for all $i\in R$. By Lemma \ref{Lemma product condition containing idmepotent}, we have that $T$ is not idempotent-sum free, a contradiction. This proves Conclusion (iii).
\end{proof}

\medskip

\begin{remark} We remark that all the bounds in Theorem \ref{theorem in product of cyclic semigroups} are sharp in general, for which the reasons are as follows.

By Conclusion (i) and (ii), we see that if the condition ${\rm D(G_{\mathcal{S}})}=1+\sum\limits_{i\in R_1}(n_i-1)$ holds true, then the lower bound ${\rm sup}\left\{(\left\lceil\frac{k_i}{n_i}\right\rceil-1)n_i: i\in R\right\}+1+ \sum\limits_{i\in  R_1}(n_i-1)$ and the upper bound ${\rm sup}\left\{(\left\lceil\frac{k_i}{n_i}\right\rceil-1)n_i: i\in R\right\}+{\rm D(G_{\mathcal{S}})}$ coincide, and thus, ${\rm I}(\mathcal{S})={\rm sup}\left\{(\left\lceil\frac{k_i}{n_i}\right\rceil-1)n_i: i\in R\right\}+{\rm D(G_{\mathcal{S}})}={\rm sup}\left\{(\left\lceil\frac{k_i}{n_i}\right\rceil-1)n_i: i\in R\right\}+1+ \sum\limits_{i\in  R_1}(n_i-1)$. In fact, some classes of finite abelian groups of form $G=\mathbb{Z}_{n_1}\oplus\cdots\oplus\mathbb{Z}_{n_r}$ with $1<n_1 \mid \cdots \mid n_r$ has been verified to satisfy ${\rm D(G)}=1+ \sum\limits_{i\in [1,r]}(n_i-1)$, including the case of finite abelian p-groups or the case of $r\leq 2$ (see Theorem D). Moreover, a significant open conjecture (see Conjecture 3.5 in \cite{GaoGeroldingersurvey}) in zero-sum theory is that the equality holds ture
\begin{equation}\label{equation conjecture r geq 3}
{\rm D(G)}=1+ \sum\limits_{i\in [1,r]}(n_i-1) \mbox{ when } r=3 \mbox{ or } n_1=\cdots =n_r,
\end{equation}
for which some theoretical evidence was provided (see \cite{GaoDis, GaoGeroldingersurveyEur} for example).

If there exists some $\delta\in R\setminus R_1$ such that the size of the component ${\rm C}_{k_{\delta};  n_{\delta}}$ of $\mathcal{S}$ as a product of cyclic semigroups is large enough,
precisely, $k_{\delta}-1=(\left\lceil\frac{k_{\delta}}{n_{\delta}}\right\rceil-1)n_{\delta}={\rm sup}\left\{(\left\lceil\frac{k_i}{n_i}\right\rceil-1)n_i: i\in R\right\}$, then as shown at the very beginning of the argument for Conclusion (iii), the lower bound ${\rm sup}\left\{\left\lceil\frac{k_i}{n_i}\right\rceil-1: i\in R\right\}+{\rm D(G_{\mathcal{S}})}$ coincides with the upper bound ${\rm sup}\left\{(\left\lceil\frac{k_i}{n_i}\right\rceil-1)n_i: i\in R\right\}+{\rm D(G_{\mathcal{S}})}$ and is definitely attained.
\end{remark}

 As noticed in the above remark that, if all $n_i\in R_1$ are equal and the conjecture shown in \eqref{equation conjecture r geq 3} holds, then the upper bound ${\rm sup}\left\{(\left\lceil\frac{k_i}{n_i}\right\rceil-1)n_i: i\in R\right\}+{\rm D(G_{\mathcal{S}})}$ is attained. To understand how closely that upper bound is getting, we considered the associated extremal problem in the opposite direction. That is, given that all $n_i\in R_1$ are pairwise coprime, could the upper bound ${\rm sup}\left\{(\left\lceil\frac{k_i}{n_i}\right\rceil-1)n_i: i\in R\right\}+{\rm D(G_{\mathcal{S}})}$ be attained and when? This is answered by Conclusion (iii) of Theorem \ref{theorem in product of cyclic semigroups}.

Well, in the rest of this paper, we shall prove the following fact that if the semigroup $\mathcal{S}$ is a direct product of infinitely many cyclic semigroups such that ${\rm I}(\mathcal{S})$ is finite, then we can find a {\sl finite} subsemigroup $\mathcal{S}'$ of $\mathcal{S}$ such that ${\rm I}(\mathcal{S})={\rm I}(\mathcal{S}')$, which can be seen in
Proposition \ref{prop periodic parts}. Furthermore, we shall deduce a conclusion to determine the precise value of ${\rm I}(\mathcal{S})$ when $\mathcal{S}$ is a product of finitely many cyclic semigroups with some constraints, which unifies Theorem D in the setting of commutative semigroups.

\medskip

\begin{prop}\label{prop periodic parts} \ For any nonempty set $R$, let $\mathcal{S}=\prod\limits_{i\in R}{\rm C}_{k_i;  n_i}$ where $k_i,n_i\geq 1$ for any $i\in R$.
Let $R_1=\{i\in R: n_i>1\}$. Suppose that $R\setminus R_1\neq \emptyset$, and that
${\rm I}(\mathcal{S})$ is finite, i.e., both $|R_1|$ and ${\rm sup}\ \{k_i:i\in R\setminus R_1\}$ are finite.  Then ${\rm I}(\mathcal{S})={\rm I}(\mathcal{S}')$
where  $\mathcal{S}'=\prod\limits_{i\in R_1\cup \{\epsilon\}}{\rm C}_{k_i;  n_i}$ with some $\epsilon\in R\setminus R_1$ satisfying $k_{\epsilon}={\rm sup}\ \{k_i:i\in R\setminus R_1\}$.
\end{prop}

\begin{proof} Let $\iota: \mathcal{S}'\rightarrow \mathcal{S}$ be a map defined by the following. For any $a'\in \mathcal{S}'$, let $\iota: a'\mapsto a$ with \begin{displaymath}
{\rm ind}_{g_i}(a(i))=\left\{ \begin{array}{ll}
{\rm ind}_{g_i}(a'(i)), & \textrm{if $i\in R_1\cup \{\epsilon\}$;}\\
k_i, & \textrm{otherwise.}\\
\end{array} \right.
\end{displaymath}
 Noticing that $C_{k_i;n_i}$ is a cyclic nilsemigroup when $i\in R\setminus R_1$, we can check that $\iota$ is a monomorphism and so $\mathcal{S}'$ is isomorphic to the subsemigroup $\iota(\mathcal{S}')$ of $\mathcal{S}$. It follows from Lemma \ref{lemma I(sub)< I(S)} that ${\rm I}(\mathcal{S}')={\rm I}(\iota(\mathcal{S}'))\leq {\rm I}(\mathcal{S})$.

Now it remains to show that ${\rm I}(\mathcal{S})\leq {\rm I}(\mathcal{S}')$. Take an idempotent-sum free sequence $T\in \mathcal{F}(\mathcal{S})$ of length ${\rm I}(\mathcal{S})-1$. Let $\widetilde{T}=\mathop{\bullet}\limits_{a\mid T} \tilde{a}\in \mathcal{F}(\mathcal{S})$, where
\begin{equation}\label{equation tilde{a}}
{\rm ind}_{g_i}(\tilde{a}(i))=\left\{ \begin{array}{ll}
{\rm ind}_{g_i}(a(i)), & \textrm{if $i\in R_1$;}\\
1, & \textrm{if $i=\epsilon$;}\\
k_i, & \textrm{otherwise.}\\
\end{array} \right.
\end{equation}
Observe that $\widetilde{T}\in \mathcal{F}(\iota(\mathcal{S}'))$.
Then we show the following.

\noindent \textbf{Claim D.} \ The sequence $\widetilde{T}$ is idempotent-sum free.

\noindent{\sl Proof of Claim D.} \ Suppose to the contrary that $\widetilde{T}$ is not idempotent-sum free, i.e., there exists a nonempty subsequence $L$ of $T$ such that $\widetilde{L}=\mathop{\bullet}\limits_{a\mid L} \tilde{a}$ is an idempotent-sum subsequence of $\widetilde{T}$. By Lemma \ref{Lemma product condition containing idmepotent} and \eqref{equation tilde{a}}, we have that
\begin{equation}\label{equation lenght of L}
|L|=\sum\limits_{a\mid L}{\rm ind}_{g_\epsilon}(\tilde{a}(\epsilon))\geq \left\lceil\frac{k_{\epsilon}}{n_{\epsilon}}\right\rceil n_{\epsilon}=k_{\epsilon},
\end{equation}
and that  for all  $i\in R_1$,
$$\sum\limits_{a\mid L}{\rm ind}_{g_i}(a(i))=\sum\limits_{a\mid L}{\rm ind}_{g_i}(\tilde{a}(i))
\geq  \left\lceil\frac{k_{i}}{n_{i}}\right\rceil n_{i} \ \mbox{ and } \  \sum\limits_{a\mid L}{\rm ind}_{g_i}(a(i))=\sum\limits_{a\mid L}{\rm ind}_{g_i}(\tilde{a}(i))\equiv 0\pmod {n_i}.$$
It follows from \eqref{equation lenght of L} that for all $i\in R\setminus R_1$,
$$\sum\limits_{a\mid L}{\rm ind}_{g_i}(a(i))\geq   |L|\geq k_{\epsilon}\geq k_i=\left\lceil\frac{k_{i}}{n_{i}}\right\rceil n_{i} \ \
\mbox{ and } \ \ \sum\limits_{a\mid L}{\rm ind}_{g_i}(a(i))\equiv 0\pmod {n_i=1}.$$ By Lemma \ref{Lemma product condition containing idmepotent}, we conclude that $L$ is a nonempty idempotent-sum subsequence of $T$, a contradiction with $T$ being idempotent-sum free. This proves Claim D.
\qed

By Claim D, we have that  ${\rm I}(\mathcal{S}')={\rm I}(\iota(\mathcal{S}'))\geq |\widetilde{T}|+1=|T|+1={\rm I}(\mathcal{S})$, completing the proof.
\end{proof}

By Theorem \ref{theorem in product of cyclic semigroups} and Theorem D, we have the following conclusion for the product of finitely many cyclic semigroups.

\begin{cor}\label{Corollary three conclusions} \ For $r\geq 1$, let $\mathcal{S}={\rm C}_{k_1;  n_1}\times \cdots \times {\rm C}_{k_r;  n_r}$.  Then $${\rm I}(\mathcal{S})=\max\limits_{i\in [1,r]}\left\{(\left\lceil\frac{k_i}{n_i}\right\rceil-1)n_i\right\}+{\rm D(G_{\mathcal{S}})}=\max\limits_{i\in [1,r]}\left\{(\left\lceil\frac{k_i}{n_i}\right\rceil-1)n_i\right\}+1+\sum\limits_{i=1}^r (n_i-1)$$ in case that one of the following conditions holds.

(i) $r=1$;

(ii) $r=2$ with $n_1\mid n_2$ or $n_2\mid n_1$;

(iii)  $\prod\limits_{i=1}^{r} n_i$ is a prime power.
\end{cor}

\begin{remark} In the above Corollary \ref{Corollary three conclusions}, when all $k_i=1$,  the semigroup $\mathcal{S}$ reduces to be the finite abelian group $\mathbb{Z}_{n_1}\oplus\cdots\oplus \mathbb{Z}_{n_r}$ and the conclusions reduces to be ones in Theorem D.
\end{remark}

\bigskip

\noindent {\bf Acknowledgements}

\noindent
This work is supported by NSFC (grant no. 11971347, 11501561).

\end{document}